\let\origsection=\section \def\section{\@ifstar{\origsection*}{\mysection}} 
\def\mysection{\@startsection{section}{1}\z@{.7\linespacing\@plus\linespacing}{.5\linespacing}{\normalfont\scshape\centering\S}}
\numberwithin{equation}{section}
\numberwithin{figure}{section}
\newtheorem{theorem}{Theorem}
\newcommand{\script}{\mathcal}
\newcommand{\parentheses}[1]{{\left( {#1} \right)}}
\newcommand{\p}{\parentheses}
\newcommand{\Set}[1]{{\left\lbrace {#1} \right\rbrace}}
\def\set#1:#2{\Set{{#1} \colon {#2}}}
\newcommand{\N}{\mathbb{N}}
\renewcommand{\triangleleft}{\vartriangleleft}
\renewcommand{\leq}{\leqslant}
\renewcommand{\geq}{\geqslant}
\renewcommand{\preceq}{\preccurlyeq}
\renewcommand{\rho}{\varrho}
\renewcommand{\subset}{\subseteq}
\newcommand{\nottriangleleft}{\not\kern-1pt\mathrel{\triangleleft}}
\begin{document}
\title{A note on minor antichains of uncountable graphs}   

\author{Max Pitz}
\address{Hamburg University, Department of Mathematics, Bundesstra\ss e 55 (Geomatikum), 20146 Hamburg, Germany}
\email{max.pitz@uni-hamburg.de}

\keywords{well-quasi ordering, stationary sets, antichain, minor}

\subjclass[2010]{05C83, 05C63}

\begin{abstract}
A simplified construction is presented for Komj\'ath's result  that for every uncountable cardinal $\kappa$, there are $2^\kappa$ graphs of size $\kappa$ none of them being a minor of another.
\end{abstract}

\maketitle

\section{Introduction}
The famous Robertson-Seymour Theorem asserts that the class of finite graphs is well-quasi-ordered under the minor relation $\preceq$: For every sequence $G_1,G_2,\ldots$ of finite graphs there are indices $i < j$ such that $G_i \preceq G_j$.\footnote{Recall that a graph $H$ is a \emph{minor} of another graph $G$, written $H \preceq G$, if to every vertex $x \in H$ we can assign a (possibly infinite) connected set $V_x \subset V(G)$, called the \emph{branch set} of $x$, so that these sets $V_x$ are pairwise disjoint and $G$ contains a $V_x-V_y$ edge whenever $xy$ is an edge of $H$.}
This is no longer true for arbitrary infinite graphs. Thomas \cite{thomas1988counter} has constructed a sequence $G_1,G_2,\ldots$ of \emph{binary trees with tops} of size of size continuum, such that $G_i \not\preceq G_j$ whenever $i < j$. Here, \emph{binary tree with tops} describes the class of graphs where one selects in the rooted infinite binary tree $T_2$ a collection $\mathcal{R}$ of rays all starting at the root, adds for each $R \in \script{R}$ a new vertex $v_R$, and makes $v_R$ adjacent to all vertices on $R$. Let us write $G(\script{R})$ for the resulting graph. In his proof, Thomas carefully selects continuum-sized collections of rays $\script{R}_1, \script{R}_2, \script{R}_3, \ldots$ such that $G_i = G(\script{R}_i)$ form the desired bad sequence. 

Thomas's result raises the question whether infinite graphs smaller than size continuum are well-quasi ordered. While this question for countable graphs is arguably the most important open problem in infinite graph theory, Komj\'ath \cite{komjath1995note} has established that for all other (uncountable) cardinals $\kappa$, there are in fact $2^\kappa$ pairwise minor-incomparable graphs of size~$\kappa$. 

The purpose of this note is to give an alternative construction for Komj\'ath's result which is simpler than the original, and also more integrated with other problems in the area:

First, our construction reinstates a pleasant similarity to Thomas's original strategy: The desired minor-incomparable graphs can already be found amongst the $\kappa$-regular trees with $\kappa$ many tops. Second, our construction bears a surprising similarity to a family of rays considered in the 60's by A.H.~Stone in his work on Borel isomorphisms \cite{stone1963sigma}. Third, our examples allow for a considerable sharpening of a result by Thomas and Kriz \cite{kriz1990clique} on graphs without uncountable clique minors but arbitrarily large tree width. And finally, a very similar family of graphs had recent applications for results about normal spanning trees in infinite graphs \cite{Pitz2020b}.

\section{Trees with tops and Stone's example}

Consider the order tree $(T,\leq)$ where the nodes of $T$ are all sequences of elements of $\kappa$ of length $\leq \omega$ including the empty sequence, and let $t \leq t'$ if $t$ is a proper initial segment of $t'$. The graph on $T$ where any two comparable vertices are connected by an edge was considered by Kriz and Thomas in \cite{kriz1990clique} where they showed that any tree-decomposition of this graph must have a part of size $\kappa$, despite not containing a subdivision of an uncountable clique. 

For our purposes, however, it suffices to consider a graph $G$ on $T$ such that  any node represented by finite sequences of length $n$ is connected to all its successors of length $n+1$ in the tree order $\leq$, and any node represented by an $\omega$-sequence is connected to all elements below  in the tree order $\leq$. Clearly, $G$ is connected. We later use the simple fact that 

\begin{enumerate}[label=(\roman*)]
\item \label{itemi} every connected subgraph $H \subset G$ has a unique minimal node $t_H$ in  $(T,\leq)$.
\end{enumerate}

Now given a set $S \subset \kappa$ consisting just of cofinality $\omega$ ordinals, choose for each $s \in S$ a cofinal sequence $f_s \colon \omega \to s$, and let $F=F(S) := \set{f_s}:{s \in S}$ be the corresponding collection of sequences in $\kappa$. Let $T^S$ denote the subtree of $T$ given by all finite sequences in $T$ together with $F(S)$, and let $G(S)$ denote the corresponding induced subgraph of $G$. We will refer to $G(S)$ as a `$\kappa$-regular tree with tops', where the elements of $F(S)$ are of course the `tops'.

To the author's best knowledge, such a collection of tree branches $F(S) = \set{f_s}:{s \in S}$ for $S$ the set of  \emph{all} cofinality $\omega$ ordinals was first considered by Stone in \cite[\S5]{stone1963sigma} for the case $\kappa = \omega_1$ and in \cite[\S3.5]{stone1972non} for the general case of uncountable regular $\kappa$.

We consider below graphs $G(S)$ where $S \subset \kappa$ is stationary. Recall that a subset $A \subset \kappa$ is \emph{unbounded} if $\sup A = \kappa$, and \emph{closed} if $\sup (A \cap \ell) = \ell$ implies $\ell \in A$ for all limits $\ell< \kappa$. The set $A$ is a \emph{club} in $\kappa$ if it is both closed and unbounded. A subset $S \subset \kappa$ is \emph{stationary} (in $\kappa$) if $S$ meets every club of $\kappa$. Below, we use the following two elementary properties of stationary sets of regular uncountable cardinals $\kappa$ (for details see e.g.\ \cite[\S8]{jech2013set}):
\begin{enumerate}[label=(\roman*),resume]
\item \label{itemii} If $S \subset \kappa$ is stationary and $S = \bigcup \set{S_n}:{n \in \N}$, then some $S_n$ is stationary.
\item \label{itemiii} \emph{Fodor's lemma}: If $S \subset \kappa$ is stationary and $f \colon S \to \kappa$ is such that $f(s)<s$ for all $s \in S$, then there is $i< \kappa$ such that $f^{-1}(i)$ is stationary. 
\end{enumerate}

\section{Constructing families of minor-incomparable graphs}

At the heart of Komj\'ath's proof lies the construction, for regular uncountable $\kappa$, of $\kappa$ pairwise minor-incomparable connected graphs of cardinality $\kappa$. From this, the singular case follows, and by considering disjoint unions of these graphs, one obtains an antichain of size $2^\kappa$, see \cite[Lemma~2]{komjath1995note}. Hence, it will be enough to prove:

\begin{theorem}
For regular uncountable $\kappa$, the class of $\kappa$-regular trees with $\kappa$ many tops contains a minor-antichain of size $\kappa$.
\end{theorem}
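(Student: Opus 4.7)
Plan: The strategy is to invoke a classical theorem of Solovay to decompose the set $\{\xi < \kappa : \cf(\xi) = \omega\}$ of cofinality-$\omega$ ordinals into $\kappa$ pairwise disjoint stationary subsets $\{S_\alpha : \alpha < \kappa\}$, and to claim that $\{G(S_\alpha) : \alpha < \kappa\}$ is the desired minor-antichain. The substance of the argument is then to prove $G(S_\alpha) \not\preceq G(S_\beta)$ whenever $\alpha \neq \beta$.

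Fix such $\alpha \neq \beta$ and suppose toward contradiction that there is a minor embedding $G(S_\alpha) \preceq G(S_\beta)$ with branch sets $(V_v)_{v \in V(G(S_\alpha))}$. For each top $f_s$ of $G(S_\alpha)$ (for $s \in S_\alpha$), property (i) gives a unique minimum $t_s$ of $V_{f_s}$ in $T^{S_\beta}$; the map $s \mapsto t_s$ is injective by disjointness of branch sets. Using (ii), I reduce to one of two principal cases: either (A) $t_s = f_{\psi(s)} \in F(S_\beta)$ is a top of $G(S_\beta)$ for $s$ in a stationary $S' \subset S_\alpha$, or (B) $t_s$ is a finite sequence for $s$ in a stationary $S' \subset S_\alpha$.

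Case (A) is the cleanest: since nothing in $T$ properly extends an $\omega$-sequence, $V_{f_s} = \{f_{\psi(s)}\}$ is a singleton, the map $\psi\colon S' \to S_\beta$ is injective, and $\psi(s) \neq s$ by $S_\alpha \cap S_\beta = \emptyset$. If $\psi(s) < s$ on a stationary subset, Fodor (iii) makes $\psi$ constant, contradicting injectivity. The subcase $\psi(s) > s$ stationarily is more delicate: I use that the only neighbors of $f_{\psi(s)}$ in $G(S_\beta)$ are the initial segments of $f_{\psi(s)}$, so these must be distributed among the $\omega$ pairwise disjoint branch sets $V_{f_s \upharpoonright k}$. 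Setting $n_s := \min\{n : f_{\psi(s)}(n) \geq s\}$ (finite by cofinality of $f_{\psi(s)}$ in $\psi(s) > s$), the entries $f_{\psi(s)}(0), \dots, f_{\psi(s)}(n_s - 1)$ are all regressive in $s$; after using (ii) to fix $n_s$, iterated applications of Fodor pin a common prefix $p$ of $f_{\psi(s)}$ on a stationary refinement. Transporting this through the branch-set equation (via a further application of (ii) identifying which $V_{f_s \upharpoonright k}$ contains $p$) forces a corresponding common prefix of $f_s$ on a stationary refinement, which iterated once more constrains $f_s$ enough to contradict the fact that $f_s$ is cofinal in the varying ordinal $s$. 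Case (B) runs in parallel: Fodor applied to the maximal entry of $t_s$ either makes it constant, bounding $t_s$ to a set of size $< \kappa$ and contradicting injectivity of $s \mapsto t_s$, or it isolates an entry of $t_s$ exceeding $s$, reducing to Case (A) via a top extracted from $V_{f_s}$ (with a separate branch-set-ladder inspection when $V_{f_s}$ contains no top of $G(S_\beta)$).

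The main obstacle I anticipate is the $\psi(s) > s$ subcase of (A), and its analogue in (B) when $V_{f_s}$ contains no top: neither admits a ready-made regressive function on $s$, so the argument has to iterate Fodor carefully through the cofinal sequences $f_{\psi(s)}$ and the branch sets along the path $(f_s \upharpoonright k)_{k < \omega}$ in $G(S_\alpha)$ before a contradiction with either injectivity of $\psi$ or the stationarity of $S_\alpha$ can be extracted.
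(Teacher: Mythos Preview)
Your overall setup --- Solovay's partition, the injective assignment $s \mapsto t_s$ via property~(i), and the regressive half of the dichotomy via Fodor --- matches the paper exactly. The gap is precisely where you anticipated it: the subcase $\psi(s) > s$ of Case~(A), and its Case~(B) analogue, are not actually proved.

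Your sketch fixes a common length-$n$ prefix $p$ of the cofinal sequences $f_{\psi(s)}$ and then proposes to ``transport'' this to a common prefix of $f_s$ by locating which $V_{f_s\upharpoonright k}$ contains $p$. But $p$ is a single vertex of $G(S_\beta)$; it lies in at most one branch set $V_w$, with $w$ independent of $s$, and there is no reason $w$ should equal $f_s \upharpoonright k$ for any $k$ (the minor relation does not require $w$ to be adjacent to $f_s$ just because $p$ is adjacent to $f_{\psi(s)}$). Even granting that some finite collection of fixed vertices $w_0,\dots,w_n$ must occur among the prefixes of $f_s$, this pins down only finitely many entries of $f_s$ and in no way contradicts that $\sup_k f_s(k) = s$ for varying $s$. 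The ``iterated once more'' step does not close the gap, and Case~(B) when $V_{f_s}$ contains no top is left entirely open.

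The paper resolves this with a global rather than local device. It defines $t_v$ for \emph{every} $v \in T^S$ (not just tops), then sets
\[
g(i) \;=\; \min\bigl\{\, j < \kappa : t_v \in T^R_j \text{ for all } v \in T^S_i \,\bigr\}.
\]
Regularity of $\kappa$ makes $g$ well-defined; it is increasing; and it is continuous at limits because any $v \in T^S_\ell \setminus \bigcup_{i<\ell} T^S_i$ is a top whose neighbours all lie in $\bigcup_{i<\ell} T^S_i$, forcing $t_v$ into $\bigcup_{i<\ell} T^R_{g(i)}$. Hence the fixed points of $g$ form a club $C$, and any $s \in S \cap C$ satisfies $s \le f(s) \le g(s) = s$, so $s = f(s) \in S \cap R$, the desired contradiction. (The paper also dispatches the finite-level case more directly than your Case~(B): if all $t_s$ sit at level $n$, then after Fodor the tops $s \in S''$ share $n+1$ common neighbours $f_s \upharpoonright 0,\dots,f_s \upharpoonright n$, whose $n+1$ distinct $t$-values must all lie strictly below any two incomparable level-$n$ nodes $t_s, t_{s'}$ --- impossible by pigeonhole.)
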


\begin{proof}
As the set of cofinality $\omega$ ordinals of a regular uncountable $\kappa$ splits into $\kappa$ many disjoint stationary subsets \cite[Lemma 8.8]{jech2013set}, it suffices to show: If $S,R$ are disjoint stationary subsets consisting of cofinality $\omega$ ordinals, then $G(S) \not \preceq G(R)$.

Suppose for a contradiction that $G(S) \preceq G(R)$. For ease of notation, we identify $s$ with $f_s$ for all $s \in S$, and similarly for $R$. For $v \in T^S$ write $t_v \in T^{R}$ for the by \ref{itemi} unique minimal node of the branch set of $v$ in $G(R)$. Note that if $v,w$ are adjacent in $G(S)$, then $t_v$ and $t_w$ are comparable in $(T^R,\leq)$. Since $T^R$ has countable height, by \ref{itemii} there is a stationary subset $S' \subset S$ such that all $t_{s}$ for $s \in S'$ belong to the same level of $T^R$. Suppose for a contradiction this level has finite height $n$. By applying Fodor's lemma~\ref{itemiii} iteratively $n+1$ times, we obtain a stationary subset $S'' \subset S'$ such that all $f_s$ for $s \in S''$ agree on $f_s(i)$ for $i \leq n$. So distinct $t_{s}$ for $s \in S''$ have at least $n+1$ common neighbours below them in $(T^R,\leq)$, a contradiction.

Thus, we may assume that $t_{s} \in R$ for all $s \in S$, 
giving rise an injective function $f \colon S \to R, \; s \to t_{s}$.
%
Since $f$ is injective, we cannot have $x< f(x)$ on a stationary subset of $S$ by Fodor's lemma~\ref{itemiii}. Hence, we may further assume that $f(x) \geq x$ for all $x \in S$. 

For $i < \kappa$ let $T^S_i$ be the subtree of $T^S$ of all elements whose coordinates are strictly less than $i$, and consider the function $g \colon \kappa \to \kappa, \; i \mapsto \min \set{j < \kappa}:{t_v \in T^R_j \text{ for all } v \in T^S_i}$. Since $\kappa$ is regular, the function $g$ is well-defined. And clearly, $g$ is increasing.
The function $g$ is also continuous. 
Indeed, for a limit $\ell < \kappa$ consider any 
$v \in T^S_\ell \setminus \bigcup_{i < \ell} T^S_i$.
Clearly, 
 $v$ is a top, and so all its neighbours belong to $\bigcup_{i < \ell} T^S_i$. Hence, $t_v$ must be comparable to infinitely many nodes in $\bigcup_{i < \ell} T^R_{g(i)}$, implying that $t_v \in \bigcup_{i < \ell} T^R_{g(i)}$, too. 
 
 Hence, the set of fixed points $C$ of $g$ forms a club in $\kappa$, see \cite[Exercise 8.1]{jech2013set}. But any $s \in S \cap C$ satisfies $s \leq f(s) \leq g(s) = s$, showing that $s = f(s) \in S \cap R$, a contradiction.
\end{proof}

\section{A sharpening of Kriz and Thomas's result}

Kriz and Thomas have used the graph on $T$ where any two comparable vertices are connected by an edge in \cite[Theorem~4.2]{kriz1990clique} as an example of a graph without a subdivision of an uncountable clique, but where any tree-decomposition must have a part of size $\kappa$. For background on tree-decompositions see \cite[\S12]{bible}.

In this section we establish that if $\kappa$ is regular, then already any graph $G(S)$ from above -- for $S \subset \kappa$ stationary and consisting just of cofinality $\omega$ ordinals -- has the same property that any tree-decomposition of $G$ must have a part of size $\kappa$. 

\begin{theorem}
For regular uncountable $\kappa$ and $S \subset \kappa$ stationary and consisting just of cofinality $\omega$ ordinals, any tree-decomposition of $G(S)$ has a part of size $\kappa$.
\end{theorem}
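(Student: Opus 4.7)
The plan is to argue by contradiction: suppose $(\mathcal{T}, (V_t)_{t \in \mathcal{T}})$ is a tree-decomposition of $G(S)$ with $|V_t| < \kappa$ for every $t \in \mathcal{T}$. Root $\mathcal{T}$, and for each vertex $v$ of $G(S)$ let $r_v$ denote the node of the connected subtree $\mathcal{T}_v := \{t \in \mathcal{T} : v \in V_t\}$ closest to the root of $\mathcal{T}$. The key auxiliary function is $\phi \colon \kappa \to \kappa$ given by $\phi(i) = \min\{j : V_{r_v} \subseteq T^S_j \text{ for every finite sequence } v \in T^S_i\}$. Regularity of $\kappa$ gives $\phi(i) < \kappa$; $\phi$ is monotone with $\phi(i) \geq i$; and $\phi$ is continuous at limits, since any finite sequence in $T^S_\ell$ has only finitely many coordinates, each bounded by some $i < \ell$. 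Hence $\phi$ has a club $C$ of fixed points and $S \cap C$ is stationary.

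Next I analyse a typical $s \in S \cap C$. For every $n < \omega$, the edge between $f_s$ and $f_s \restriction n$ in $G(S)$ lies in some common part, so $\mathcal{T}_{f_s} \cap \mathcal{T}_{f_s \restriction n} \neq \emptyset$, forcing $r_{f_s}$ and $r_{f_s \restriction n}$ to be comparable in $\mathcal{T}$. This yields a dichotomy: either (A) $r_{f_s \restriction n}$ is an ancestor of $r_{f_s}$, and then $f_s \restriction n \in V_{r_{f_s}}$; or (B) $r_{f_s \restriction n}$ is a descendant of $r_{f_s}$, and then $f_s \in V_{r_{f_s \restriction n}}$. Using \ref{itemii}, I pass to a stationary subset on which one of the two alternatives holds for infinitely many $n$. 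In case (A) the ancestors of $r_{f_s}$ form a finite chain in the graph-theoretic tree $\mathcal{T}$, so pigeonholing produces a single node $q_s \leq r_{f_s}$ with $r_{f_s \restriction n} = q_s$ for infinitely many $n$; I split further according to whether $q_s = r_{f_s}$ or $q_s < r_{f_s}$.

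The final contradiction uses iterated Fodor. Applying \ref{itemii} I fix a single length $n_0 < \omega$ on a stationary subset, and then iterate \ref{itemiii} $n_0$ times on the regressive coordinates $f_s(j) < s$ for $j < n_0$ to pin down a single finite sequence $u = f_s \restriction n_0$ on a stationary $S^* \subseteq S \cap C$. Thus the node $t^* := r_u \in \mathcal{T}$ is the same for every $s \in S^*$. In case (B) and the sub-case $q_s = r_{f_s}$ of (A), $V_{t^*}$ must contain $f_s$ for every $s \in S^*$, so $|V_{t^*}| \geq |S^*| = \kappa$, a contradiction. In the remaining sub-case $q_s < r_{f_s}$, $V_{t^*}$ does not contain $f_s$, but it is contained in $T^S_s$ and contains infinitely many prefixes of $f_s$; since these prefixes have coordinates cofinal in $s$, any two distinct $s_1 < s_2$ in $S^*$ must yield different $q_s$ (otherwise $V_{t^*} \subseteq T^S_{s_1}$ would force all coordinates of $f_{s_2}$ to be $< s_1$, contradicting cofinality in $s_2$), so $q_s = t^*$ for all $s \in S^*$ is again a contradiction. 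The main obstacle is the bookkeeping around the comparability dichotomy; once $\phi$ and its club are in place, the rest is a careful iterated application of properties \ref{itemii} and \ref{itemiii}.
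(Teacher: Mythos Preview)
Your argument is correct but takes a genuinely different route from the paper's. The paper first isolates a reusable structural observation about $G(S)$: deleting any set $X$ of fewer than $\kappa$ vertices leaves a component that still meets any prescribed stationary $S' \subseteq S$ stationarily (proved directly via \ref{itemii} and iterated Fodor). It then fixes the minimal level $n$ of the decomposition tree $Q$ at which stationarily many minimal nodes $q_s$ sit, and iterates the observation along a length-$n$ path from the root to manufacture a component beyond that path which still meets $S'$ stationarily; this is impossible, since any such component can meet $S'$ only inside a single small part $V_{q_s}$.

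You instead transplant the normal-function machinery from the paper's minor-antichain proof: your $\phi$ plays exactly the role of $g$ there, and the club of fixed points replaces the paper's component-preservation lemma. This gives a pleasing methodological unity between the two theorems, at the price of a longer case analysis on the comparability of $r_{f_s}$ and $r_{f_s\restriction n}$ in $\mathcal{T}$. The paper's route is shorter and isolates a clean standalone fact about $G(S)$ that could be reused elsewhere; your route shows that one device drives both results. One minor point worth a word: the claim $\phi(i)\geq i$ holds because $v\in V_{r_v}$ for every length-one sequence $v=(\alpha)$ with $\alpha<i$, forcing $\alpha<\phi(i)$.
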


We remark that this result is sharp: If $\kappa =\omega_1$ and $S \subset \omega_1$ is non-stationary, then $G(S)$ has a normal spanning tree \cite{Pitz2020b}, and hence a tree-decomposition into finite parts.

In the proof, $Q^n$ denotes the $n$th level of a rooted tree $Q$, and $Q^{<n} = \bigcup_{m<n} Q^m$.

\begin{proof}
We start with an observation: Given any stationary subset $S' \subset S$ and any set $X$ of vertices of $G(S)$ with $|X|<\kappa$, at least one component of $G(S) - X$ contains a stationary subset of $S'$. Indeed, using the notation as in the previous proof, since $X$ is small,  we have $X \subset T^S_i$ for some $i < \kappa$, and $S' \setminus T^S_i$ is still stationary. Now for every $s \in S' \setminus T^S_i$, let $n_s$ be minimal such that $f_s(n_s) \notin T^S_i$. By \ref{itemii} there is a stationary subset $S'' \subset S' \setminus T^S_i$ whose elements agree on $n = n_s$. By applying Fodor's lemma~\ref{itemiii} $n+1$ times, there is a stationary subset $S''' \subset S''$ such that the maps $f_s$ agree on their first coordinates up to $n$ for all $s \in S'''$. Then all 
$s \in S'''$ have a common neighbour outside of $T^S_i$, so belong to the same component. 

Now suppose for a contradiction that $G(S)$ has a tree-decomposition $(Q,(V_q)_{q \in Q})$ such that $|V_q|< \kappa$ for all $q \in Q$. Fix an arbitrary root $q_0$ of $Q$. Every $s \in S$ is contained in some part; let $q_s \in Q$ be minimal in the tree order of $Q$ such that $s \in V_{q_s}$. By \ref{itemii} there is a minimal $n \in \N$ and a stationary subset $S' \subset S$ such that $q_s \in Q^n$ for all $s \in S'$. 

Then no component of $G - \bigcup_{q \in Q^{{<}n}} V_q$ contains a stationary subset of $S'$, as the intersection of any such component with $S'$ is contained in some less than $\kappa$ sized $V_{q_s}$. 
However, by iteratively using our earlier observation, there is a path $q_0q_1\ldots q_{n-1}$ starting at the root of $Q$, a decreasing sequence of components $C_i$ of $G - \p{V_{q_0} \cup \cdots \cup V_{q_i}}$ and stationary subsets $S_i$ of $S'$ such that each $S_i \subset C_i$. But $S_{n-1} \subset C_{n-1} \subset G - \bigcup_{q \in Q^{{<}n}} V_q$,  a contradiction.
\end{proof}

\bibliographystyle{plain}
\bibliography{reference}

\end{document}